\newtheorem{theorem}{Theorem}[section]
\newtheorem{proposition}[theorem]{Proposition}
\begin{document}

\title[Separately continuous functions of $n$ variables with given restriction]{Construction of separately continuous functions of $n$
variables with given restriction}

\author{V.V.Mykhaylyuk}
\address{Department of Mathematics\\
Chernivtsi National University\\ str. Kotsjubyn'skogo 2,
Chernivtsi, 58012 Ukraine}
\email{vmykhaylyuk@ukr.net}

\subjclass[2000]{Primary 54C08, 54C30, 54C05}


\commby{Ronald A. Fintushel}


\keywords{separately continuous functions, first Baire class function, diagonal of mapping}

\begin{abstract}
It is solved the problem on construction of separately continuous functions on product of $n$ topological spaces with given restriction. In particular, it is shown that for every topological space $X$ and $n-1$ Baire class function $g:X\to \mathbb R$ there exists a separately continuous function $f:X^n\to\mathbb R$ such that $f(x,x,\dots,x)=g(x)$ for every $x\in X$.
\end{abstract}

\maketitle
\section{Introduction}

For every set $X$ and an integer $n\ge 2$ the mapping $d_n:X\to X^n$, $d_n(x)=(x,\dots,x)$, is called {\it diagonal mapping}. The set $\Delta_n=d_n(X)$ is called {\it the diagonal of the space $X$}, and the composition $g=f\circ d_n:X\to Y$ is called {\it the diagonal of mapping $f:X^n\to Y$}.

Let $X$ be a topological space. A mapping $f:X\to {\mathbb R}$ is called {\it a function of the first Baire class} if there exists a sequence $(f_n)_{n=1}^{\infty}$ of continuous function $f_n:X\to \mathbb R$ which converges to $f$ pointwise on $X$, i.e. $f(x)=\lim\limits_{n\to\infty}f_n(x)$ for every $x\in X$. Let $2\le\alpha <\omega_1$. A mapping $f:X\to\mathbb R$ is called {\it a function of $\alpha$-th Baire class} if there exists a sequence $(f_n)$ of at most Baire class $<\alpha$ functions $f_n:X\to\mathbb R$ which converges to $f$ pointwise on $X$.

R.~Baire shows in [1] that the diagonals of separately continuous functions of two real variables (i.e. functions which are continuous with respect each variable) are exactly the first Baire functions. A.~Lebesgue proved in [2] that every separately continuous function of $n$ real variables is a $(n-1)$-th Baire class function, in particular, its diagonal is a function of the same class. Conversely, it was shown in [3,4] that every real function of $(n-1)$-th Baire class is the diagonal of a separately continuous function of $n$ real variables.

Beginning from the second half of the 20-th century many mathematicians (see [5-10]) studied actively Baire classification of separately continuous functions and their analogs. Note that W.~Rudin firstly used a partition of the unit for the establishment of the belonging to the first Baire class of a separately continuous function defined on the product of metrizable and topological spaces and valued in a locally convex space. A development of the Rudin's method on the case of nonmetrizable spaces leads to the appearance of the following notion [11].

Topological space $X$ is called {\it a  $PP$-space} if there exist a sequence of locally finite covers $(U(n,i):i\in I_n)$ which of functionally open in $X$ sets $U(n,i)$  and sequence of families $(x(n,i):i\in I_n)$ of points $x(n,i)\in X$ such that for every $x\in X$ and neighborhood $U$ of $x$ there exists $n_0\in \mathbb N$ such that $x(n,i)\in U$ for every $n\ge n_0$ and $i\in I_n$ with $x\in U(n,i)$.

Class of $ÐÐ$-spaces is quite wide. It contain $\sigma$-metrizable paracompact spaces, topological vector spaces, which can be presented as the union of an increasing sequence of metrizable subspaces, Nemytski plane and Sorgenfrey line. It follows from [12] (see also [8, Theorem 3.14]) that for every $n\ge 2$ and
$PP$-space $X$ every separately continuous function $f:X^n\to\mathbb R$ is a function of the $(n-1)$-th Baire class and, in particular, its diagonal is a function of $(n-1)$-th Baire class.

The inverse problem on the construction of separately continuous function with the given diagonal was overlooked by mathematicians during a long time. This investigation was recrudescented by V.Maslyuchenko. The most general result for separately continuous functions of $n$ variables was obtained in [13]. It was obtained in [13] (see also [8, Theorem 3.24]) that for every function $g$ of $(n-1)$-th Baire class, which defined on a topological space $X$ with a normal $n$-th power and a $G_\delta$-diagonal $\Delta_n$ there exists a separately continuous function $f:X^n\to \mathbb R$ with the diagonal $g$.

On other hand, in the investigations of separately continuous functions $f:X\times Y\to \bf R$ defined on the product of topological spaces $X$ and $Y$ the following two topologies naturally arise (see [14]): the separately continuous topology $\sigma$ (the weakest topology with respect to which all functions $f$ are continuous) and the cross-topology $\gamma$ (it consists of all sets $G$ for which all $x$-sections $G^x=\{y\in Y:(x,y)\in G\}$ and $y$-sections
$G_y=\{x\in X:(x,y)\in G\}$ are open in $Y$ and $X$ respectively). The separately continuous topology $\sigma$ and the cross-topology $\gamma$ on the product $X_1\times X_2\times \cdots \times X_n$ of topological spaces $X_1, X_2, \dots , X_n$. Since the diagonal $\Delta=\{(x,x):x\in \bf R \}$ is a closed discrete set in $(\bf R^2,\sigma )$ or in $(\bf R^2,\gamma)$ and not every function defined on $\Delta$ can be extended to a separately continuous function on $\bf R^2$, even for $X=Y=\bf R$ the topologies $\sigma$ and $\gamma$ are not normal (moreover, $\gamma$ is not regular [14,15]). Thus, the construction separately continuous functions with the given diagonal is a partial case of more general problem: to establish for which subsets $E$ of a product $X_1\times X_2\times \cdots \times
X_n$ of topological spaces $X_1, X_2, \dots , X_n$ and $\sigma$-continuous or $\gamma$-continuous function $g:E\to \bf R$ there exists a separately continuous function $f:X\times Y \to \bf R$ for which the restriction $f|_E$ coincides with $g$.

This question for functions of two variables was study in [16]. It was obtained in [16] that for every topological space $X$ and a function $g:X\to\mathbb R$ of the first Baire class there exists a separately continuous function $f:X^2\to\mathbb R$ with the diagonal $g$.

In this paper analogously as in [16] we solve the problem on the construction of separately continuous function $f:X_1\times\cdots\times X_n\to\mathbb R$ with given restriction on a special type set $E\subseteq X_1\times\cdots\times X_n$. In particular, we obtain that the for a topological space $X$ and a function $g:X\to\mathbb R$ of $(n-1)$-th Baire class there exists a separately continuous function $f:X^n\to\mathbb R$ with the diagonal $g$.

\section{Notions and auxiliary statement}

{\it A set $A\subseteq X$ has the extension property in a topological space $X$}, if every continuous function $g:A\to [0,1]$ can be extended to a continuous function $f:X\to [0,1]$. According to Tietze-Uryson theorem [17, p.116], every closed set in a normal space has the extension property.

For a mapping $f:X\to Y$ and a set $A\subseteq X$ by $f|_A$ we denote the restriction of $f$ on $A$.

A set $A$ in a topological space $X$ is called {\it functionally closed} if there exists a continuous function $f:X\to [0,1]$ such that $A=f^{-1}(0)$.

A topological space $X$ is called {\it pseudocompact} if every continuous on $X$ function is bounded. 

A set $E$ in the product $X_1\times X_2\times\cdots\times X_n$ of topological spaces $X_1, X_2,\dots X_n$ is called {\it projectively homeomorphic} if for every $1\le i\le n$ the projection $p_i:E\to X_i$, $p_i(x_1,x_2, \dots ,x_n)=x_i$, is a homeomorphic embedding.

A set $E$ in the product $X_1\times X_2\times\cdots\times X_n$ is called {\it projectively injective} if $x_1\ne y_1$, $x_2\ne y_2$, ... , $x_n\ne y_n$ for every distinct points $(x_1,x_2,\dots,x_n), (y_1,y_2,\dots,y_n)\in E$, i.e. all projections of the set $E$ on axis $X_i$ are injective and {\it locally projectively injective} if for every $x\in X_1\times X_2\times\cdots\times X_n$ there exists a neighborhood $W$ of $x$ such that the set $E\cap W$ is projectively injective.

For a function $f:X\to \mathbb R$ by ${\rm supp} f$ we denote the set $\{x\in X: f(x)\ne 0\}$.

\begin{proposition}\label{p:1} Let $A$ be a functionally closed set which has the extension property in a topological space $X$, $1\le\alpha<\omega_1$ and $g:A\to\mathbb R$ be a function of $\alpha$-th Baire class. Then the function $f:X\to\mathbb R$ which defined by: $f(x)=g(x)$ for every $x\in A$ and $f(x)=0$ for every $x\in X\setminus A$, is a function of $\alpha$-th Baire class too.
\end{proposition}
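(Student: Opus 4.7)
The plan is to prove the statement by transfinite induction on $\alpha$, with all the real work concentrated in the base case $\alpha=1$. For the base case, I would use the fact that $A$ is functionally closed to fix a continuous $\varphi\colon X\to[0,1]$ with $A=\varphi^{-1}(0)$, and build the continuous cutoffs $\psi_n(x)=\max(0,1-n\varphi(x))$; each $\psi_n$ equals $1$ on $A$ and, at every $x\in X\setminus A$, vanishes as soon as $n\ge 1/\varphi(x)$.

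For $\alpha=1$, write $g=\lim_n g_n$ pointwise on $A$ with $g_n\colon A\to\mathbb R$ continuous. First truncate: $\tilde g_n:=\max(-n,\min(n,g_n))$ is continuous, takes values in $[-n,n]$, and still converges pointwise to $g$ on $A$, since each $g_n(x)$ is eventually bounded. The extension property of $A$ (applied after an affine rescaling into $[0,1]$) yields a continuous extension $\tilde h_n\colon X\to[-n,n]$ of $\tilde g_n$. Set $f_n:=\psi_n\tilde h_n$: this is continuous on $X$, equals $\tilde g_n$ on $A$ (so $f_n\to g$ there), and at each $x\in X\setminus A$ is eventually $0$. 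Hence $f_n\to f$ pointwise and $f$ lies in the first Baire class.

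For $\alpha\ge 2$, assume the statement at every level $\beta<\alpha$ and write $g=\lim_n g_n$ with $g_n\colon A\to\mathbb R$ of some Baire class $\beta_n<\alpha$. Since $\alpha\ge 2$, each $g_n$ is of class $\max(\beta_n,1)<\alpha$, so the inductive hypothesis (together with the already-treated base case, applied in the subcase $\beta_n=0$) furnishes an extension $f_n\colon X\to\mathbb R$ of the same class that vanishes off $A$. On $A$ one has $f_n=g_n\to g=f$ and on $X\setminus A$ one has $f_n\equiv 0=f$, so $f_n\to f$ pointwise and $f$ is of class $\alpha$.

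The real obstacle is confined to the base case: merely extending $g_n$ off $A$ is immediate from the Tietze-type extension property, but the extensions must simultaneously be compatible with the pointwise-zero behaviour on $X\setminus A$. The product $\psi_n\tilde h_n$ balances these two requirements, and the preliminary truncation to $[-n,n]$ is what allows the $[0,1]$-valued extension property to be used even when $g_n$ is unbounded on $A$.
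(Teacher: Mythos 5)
Your proof is correct and follows essentially the same route as the paper: reduce to $\alpha=1$ by (transfinite) induction, truncate the approximating sequence to $[-n,n]$, extend by the extension property, and multiply by the cutoffs $1-\min\{1,n\varphi(x)\}$ built from a function witnessing that $A$ is functionally closed. You merely make explicit two details the paper leaves implicit (the truncation justifying the boundedness of the $g_n$, and the inductive step for $\alpha\ge 2$).
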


\begin{proof} It follows from the definition of function of $\alpha$-th Baire class that it enough to prove the statement for $\alpha=1$.

Let $\alpha=1$ and $(g_n)_{n=1}^\infty$ be a sequence of continuous functions $g_n:A\to [-n,n]$, which pointwise converges to the function $g$. Since the set $A$ has the extension property in the topological space $X$, there exists a sequence $(f_n)_{n=1}^\infty$ of continuous function $f_n:X\to\mathbb R$ such that $f_n|_A=g_n$.

We take a continuous function $\varphi:X\to [0,1]$ such that $A=\varphi^{-1}(0)$ and for every $n\in\mathbb N$ and $x\in X$ we put $\varphi_n(x)=1-\min\{1,n\varphi(x)\}$. Clearly that all functions $\varphi_n$ are continuous on $X$, $A=\varphi_n^{-1}(1)$ and for every $x\in X\setminus A$ there exists an integer $m\in\mathbb N$ such that $\varphi_n(x)=0$ for every $n\ge m$. Then the sequence of continuous functions $f_n\cdot\varphi_n$ pointwise converges to the function $f$.
\end{proof}

\section{Main results}

\begin{theorem}\label{th:2} Let $E$ be a projectively homeomorphic set in the product $X_1\times \cdots \times X_n$ of topological spaces $X_1, \dots ,X_n$, moreover the projections $E_1,\dots,E_n$ of the set $E$ have the extension property in the spaces $X_1,\dots,X_n$ respectively and $g:E\to\mathbb R$ be a function of $(n-1)$-th Baire class. Then if $E$ is pseudocompact or all sets $E_1,\dots,E_n$ are functionally closed in $X_1,\dots,X_n$, theb there exists a separately continuous function $f:X_1\times\cdots\times X_n\to\mathbb R$ such that $f|_E=g$. 
\end{theorem}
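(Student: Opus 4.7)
The plan is to proceed by induction on $n \geq 2$, with the base case $n=2$ supplied by the result of [16] (which the paper acknowledges as the model). Given $g:E\to\br$ of Baire class $n-1$, I first write $g=\lim_k g_k$ pointwise with each $g_k$ of Baire class at most $n-2$. Let $E'$ denote the projection of $E$ onto $X_1\tm\cdots\tm X_{n-1}$; since $E$ is projectively homeomorphic the map $E\to E'$ is itself a homeomorphism, and $E'$ is again projectively homeomorphic in the smaller product with projections $E_1,\dots,E_{n-1}$, all retaining the extension property and remaining functionally closed (respectively, with $E'$ still pseudocompact). Transporting each $g_k$ along the homeomorphism $E'\cong E$ and invoking the inductive hypothesis yields a separately continuous $h_k:X_1\tm\cdots\tm X_{n-1}\to\br$ with $h_k|_{E'}$ equal to the transported $g_k$. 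In particular, for every $(x_1,\dots,x_n)\in E$ the sequence $h_k(x_1,\dots,x_{n-1})$ converges to $g(x_1,\dots,x_n)$.

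Next I would glue the $h_k$ along the last coordinate. In the functionally closed case, pick a continuous $\vp:X_n\to[0,1]$ with $E_n=\vp^{-1}(0)$ and set $\psi_k=\max\{0,1-k\vp\}$ and $\alpha_k=\psi_{k-1}-\psi_k$, with $\psi_0\equiv 1$. Each $\alpha_k$ is continuous on $X_n$, vanishes on $E_n$, and $\sum_k\alpha_k\equiv 1$ off $E_n$ with only finitely many nonzero terms at each point. The candidate extension is
\[
f(x_1,\dots,x_n)=\sum_{k=1}^{\infty}\alpha_k(x_n)\,h_k(x_1,\dots,x_{n-1}),\qquad x_n\notin E_n,
\]
together with $f|_E=g$ and constant interpolation along each $E_n$-slice. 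The pseudocompact case reduces to the functionally closed one by noting that continuous functions on $E_n$ are bounded, which through the extension property produces a substitute for $\vp$ with the required vanishing set. Separate continuity in the first $n-1$ variables is immediate: a locally finite sum of separately continuous functions off $E_n$, and a constant on each $E_n$-slice.

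The decisive obstacle is separate continuity in $x_n$ at points $(x_1,\dots,x_n)$ whose last coordinate lies in $E_n$ but whose $(x_1,\dots,x_{n-1})$ is not the $E'$-partner of $x_n$. There the sequence $h_k(x_1,\dots,x_{n-1})$ need not converge, while the weights $\alpha_k(x_n)$ concentrate on arbitrarily large indices as $x_n\to E_n$, so the naive weighted sum can oscillate. The remedy, mirroring the two-variable construction of [16], is to multiply each $h_k$ by a separately continuous cutoff $\eta_k:X_1\tm\cdots\tm X_{n-1}\to[0,1]$ that equals $1$ on $E'$ and vanishes outside a neighborhood of $E'$ that tightens with $k$, so that the weighted sum becomes locally finite everywhere while agreeing with $g$ on $E$. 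Proposition~\ref{p:1} is precisely the tool that keeps $h_k\eta_k$ in the correct Baire class and that propagates the extension property across the inductive step; once the cutoffs are in place, routine verification confirms separate continuity of $f$ in every coordinate and $f|_E=g$, closing the induction.
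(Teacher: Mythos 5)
Your outline founders on exactly the step you yourself flag as ``the decisive obstacle.'' The cutoffs $\eta_k$ --- separately continuous functions equal to $1$ on $E'$ and vanishing outside a ``neighborhood of $E'$ that tightens with $k$'' --- simply cannot be manufactured in an arbitrary product $X_1\times\cdots\times X_{n-1}$: the set $E'$ need not be a $G_\delta$, the product need not be normal, and there is no metric to make ``tightening'' meaningful. Those are precisely the hypotheses (normal $n$-th power, $G_\delta$ diagonal) under which the earlier result [13, Theorem~2] operates, and the entire point of Theorem~\ref{th:2} is to dispense with them. The paper's proof does not attempt any direct gluing in $X_1\times\cdots\times X_n$. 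Instead it extends the countable family $g_{k_1,\dots,k_{n-1}}^{(i)}$ of continuous functions from $E_i$ to $X_i$, forms the diagonal products $\varphi_i=\mathop{\Delta}_{s\in S}f_s^{(i)}:X_i\to\mathbb{R}^S$, and thereby factors the whole problem through the \emph{metrizable} space $Z=\bigcup_i\varphi_i(X_i)$, where $A=\varphi_1(E_1)=\cdots=\varphi_n(E_n)$ is functionally closed, Proposition~\ref{p:1} upgrades $\tilde g$ to a class-$(n-1)$ function on all of $Z$, and [13, Theorem~2] applies to $Z^n$ because metrizability supplies normality and the $G_\delta$ diagonal for free. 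The desired $f$ is then $\tilde f\circ(\varphi_1\times\cdots\times\varphi_n)$. Your induction on $n$ with base case [16] never performs this metrization step, so the missing cutoffs cannot be supplied and the argument does not close.

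Two further defects are worth naming. First, your weights $\alpha_k(x_n)$ measure only the proximity of $x_n$ to $E_n$, not the proximity of $(x_1,\dots,x_n)$ to $E$; with the declared ``constant interpolation'' on an $E_n$-slice, the cutoff sum $\sum_k\alpha_k(x_n)\eta_k h_k$ tends to $0$ as $x_n\to x_n^0\in E_n$ for any fixed $(x_1,\dots,x_{n-1})\notin E'$ (eventually all surviving terms have vanishing $\eta_k$ or vanishing $\alpha_k$), while the slice value is $g(h_n(x_n^0))$, so continuity in $x_n$ fails wherever $g(h_n(x_n^0))\ne 0$. Second, the pseudocompact case is not handled by producing ``a substitute for $\varphi$'': a pseudocompact subset of a general topological space need not be functionally closed, and the paper uses pseudocompactness only after passing to $Z$, where the continuous image $\varphi_i(E_i)$ is a pseudocompact subset of a metrizable space, hence compact and functionally closed there.
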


\begin{proof} Firstly we consider the case of functionally closed sets $E_1,\dots,E_n$ in $X_1,\dots,X_n$ respectively.

Let $$f^{(1)}_{0}:X_1\to [0,1],\dots,f^{(n)}_0:X_n\to [0,1]$$ be continuous functions such that $$E_i=(f^{(i)}_0)^{-1}(0)$$ for every $1\le
i\le n$. For every $i=1,\dots,n$ and $x=(x_1,\dots,x_n)\in E$ we put $h_i(x_i)=x$. Clearly that $h_i$ is a homeomorphism of the set $E_i$ on the set $E$. Since $g$ is a function of $(n-1)$-th Baire class on $E$, there exists a family $(g_{k_1,\dots,k_{n-1}}:k_1,\dots,k_n\in{\mathbb N})$ of continuous functions $g_{k_1,\dots,k_{n-1}}:E\to\mathbb [-k_{n-1},k_{n-1}]$ such that 
$$
g(x)=\lim\limits_{k_1\to \infty} \lim\limits_{k_2\to \infty}\dots
\lim\limits_{k_{n-1}\to \infty}g_{k_1,k_2,\dots,k_{n-1}}(x)
$$
for every $x\in E$. For every $k_1,\dots,k_{n-1}\in\mathbb N$ and $1\le i\le n$ by $g_{k_1,\dots,k_{n-1}}^{(i)}$ we denote the continuous function $$g_{k_1,\dots,k_{n-1}}^{(i)}:E_i\to [-k_{n-1},k_{n-1}],\,\,\,g_{k_1,\dots,k_{n-1}}^{(i)}(x_i)=g_{k_1,\dots,k_{n-1}}(h_i(x_i))$$
and choose a continuous function $f_{k_1,\dots,k_{n-1}}^{(i)}:X_i\to\mathbb R$ such that $$f_{k_1,\dots,k_{n-1}}^{(i)}|_{E_i}=g_{k_1,\dots,k_{n-1}}^{(i)}.$$

We put $S=\{0\}\cup {\mathbb N}^{n-1}$. Further for every $s=(k_1,\dots,k_{n-1})\in S$ and $1\le i\le n$ the functions $f_{k_1,\dots,k_{n-1}}^{(i)}$, $g_{k_1,\dots,k_{n-1}}^{(i)}$ and $g_{k_1,\dots,k_{n-1}}$ we denote by $f_s^{(i)}$, $g_s^{(i)}$ and $g_s$.

We consider the continuous mappings $$\varphi_i=\mathop{\Delta}\limits_{s\in S}f_s^{(i)}:X_i\to
{\mathbb R}^S, \,\,\,\varphi_i(x_i)=(f_s^{(i)}(x_i))_{s\in S}.$$
We denote $Z=\bigcup\limits_{i=1}^n \varphi_i(X_i)$. Note that $Z$ is a metrizable space and for every $1\le i\le n$, $s\in {\mathbb N}^{n-1}$ and  $x=(x_1,\dots,x_n)\in E$ we have $f_s^{(i)}(x_i)=g_s^{(i)}(x_i)=g_s(x)$. Moreover for every $1\le i\le n$ the point $x_i$ from $X_i$ belongs to the set $E_i$
if and only if $\varphi_i(x_i)(0)=f_o^{(i)}(x_i)=0$. Therefore $\varphi_1(x_1)=\varphi_2(x_2)=\dots=\varphi_n(x_n)$ for every point $(x_1,x_2,\dots,x_n)\in E$, besides the set $A=\varphi_1(E_1)=\dots=\varphi_n(E_n)$ is functionally closed in $Z$.

We consider the function $\tilde{g}:A\to\mathbb R$, $\tilde{g}(z)=g(h_1(x_1))$, where $x_1\in E_1$ and $z=\varphi_1(x_1)$. Note that for every $x_1, y_1\in E_1$ the equality $\varphi_1(x_1)=\varphi_1(y_1)$ implies that $g^{(1)}_s(x_1)=g^{(1)}_s(y_1)$ for every $s\in S$. Then $g_s(h_1(x_1))=g_s(h_1(y_1))$ for every $s\in {\mathbb N}^{n-1}$ and $g(h_1(x_1))=g(h_1(y_1))$.  Thus, the definition of the function $\tilde{g}$ is correct.

For every $s=(k_1,\dots, k_{n-1})\in S$ the function $\tilde{f}_s:Z\to\mathbb R$, $\tilde{f}_s(z)=z(s)$, is continuous.
Then for every $z\in A$ we choose $x_1\in E_1$ such that $\varphi_1(x_1)=z$ and obtain
$$
\tilde{f}_s(z)=z(s)=f_s^{(1)}(x_1)=g_s^{(1)}(x_1)=g_s(h_1(x_1)).
$$
Therefore, 
$$
\lim\limits_{k_1\to\infty}\dots \lim\limits_{k_{n-1}\to\infty}\tilde{f}_s(z)= \lim\limits_{k_1\to\infty}\dots \lim\limits_{k_{n-1}\to\infty}
g_s(h_1(x_1))=g(h_1(x_1))=\tilde{g}(z).
$$
Thus, the function $\tilde{g}$ is a function of $(n-1)$-th Baire class on $A$. According to Proposition \ref{p:1}, if we put $\tilde{g}(z)=0$ for every $z\in Z\setminus A$, then we obtain the function $\tilde{g}$ of $(n-1)$-th Baire class on $Z$. It follows from [13, Theorem 2] that there exists a separately continuous function $\tilde{f}:Z^n\to\mathbb R$ with the diagonal $\tilde{g}$.

Consider the separately continuous function $$f:X_1\times\cdots \times X_n\to\mathbb R,\,\,\,f(x_1,\dots,x_n)=\tilde{f}(\varphi_1(x_1),\dots,\varphi_n(x_n)).$$
Let $x=(x_1,\dots,x_n)\in E$. Then $\varphi_1(x_1)=\dots=\varphi_n(x_n)\in A$ and
$$
f(x_1,\dots,x_n)=\tilde{f}(\varphi_1(x_1),\dots,\varphi_n(x_n))=\tilde{g}(\varphi_1(x_1))=g(h_1(x_1))=g(x).
$$
Thus, $g$ is the diagonal of the mapping $f$.

In the case of the pseudocompact set $E$ we can reasoning analogously. We consider the set $S={\mathbb N}^{n-1}$ and obtain continuous mappings $\varphi_i:E_i\to{\mathbb R}^S$. Then the set $A=\varphi_1(E_1)=\dots=\varphi_n(E_n)$ is a functionally closed set as a pseudocompact subsets in the metrizable space $Z$.
\end{proof}

In the case of $X_1=X_2=\dots=X_n=X$ ³ $E=\Delta_n$ we obtain the following result.

\begin{theorem} \label{th:3} Let $X$ be a topological space and $g:X\to\mathbb R$ be a function of $(n-1)$-th Baire class. Then there exists a separately continuous function $f:X^n\to\mathbb R$ with the diagonal $g$.\end{theorem}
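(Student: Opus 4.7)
The plan is to obtain Theorem \ref{th:3} as an immediate specialization of Theorem \ref{th:2} applied to the diagonal subset $E = \Delta_n$ of $X^n$. Since Theorem \ref{th:2} has already been established, the only work is to verify that its hypotheses hold in this very simple geometric situation and to transport the given function $g$ from $X$ onto $\Delta_n$.

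First I would set $X_1 = \cdots = X_n = X$ and $E = \Delta_n = d_n(X)$, and define $\tilde g : \Delta_n \to \mathbb R$ by $\tilde g(x,x,\dots,x) = g(x)$. Since $d_n : X \to \Delta_n$ is a homeomorphism (it is continuous, bijective onto $\Delta_n$, and each coordinate projection $p_i$ restricted to $\Delta_n$ inverts it), the function $\tilde g$ has the same Baire class as $g$, namely $(n-1)$-th Baire class.

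Next I would check, one by one, the structural hypotheses of Theorem \ref{th:2} for this choice of $E$. Projective homeomorphism is clear: each projection $p_i : \Delta_n \to X$ equals $d_n^{-1}$ and is a homeomorphic embedding. Each image $E_i = p_i(\Delta_n) = X$ trivially has the extension property in $X_i = X$ (any continuous $[0,1]$-valued function on $X$ serves as its own extension). Finally, $E_i = X$ is functionally closed in $X$, being $\varphi^{-1}(0)$ for the constant function $\varphi \equiv 0$.

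Having verified all assumptions, Theorem \ref{th:2} provides a separately continuous function $f : X^n \to \mathbb R$ with $f|_{\Delta_n} = \tilde g$, which unpacks to $f(x,x,\dots,x) = g(x)$ for every $x \in X$. There is no real obstacle here: the whole content of Theorem \ref{th:3} is absorbed into Theorem \ref{th:2}, and the only mild subtlety is noticing that the functional closedness branch of Theorem \ref{th:2} is vacuously available, so one need not assume anything like pseudocompactness of $X$.
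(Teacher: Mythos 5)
Your proposal is correct and follows exactly the paper's route: the paper likewise obtains Theorem \ref{th:3} as the special case of Theorem \ref{th:2} with $X_1=\cdots=X_n=X$ and $E=\Delta_n$. Your explicit verification that $\Delta_n$ is projectively homeomorphic, that $E_i=X$ trivially has the extension property, and that $X$ is functionally closed in itself (via $\varphi\equiv 0$) is exactly the routine checking the paper leaves implicit.
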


\section{Functions on the product of compacts}

\begin{theorem} \label{th:4} Let $X_1$,...,$X_n$ be a compacts, $E$ be a closed projectively injective set in $X_1\times\cdots\times X_n$ and  $g: E\to\mathbb R$ be a function of $(n-1)$-th Baire class. Then there exists a separately continuous function $f:X_1\times\cdots\times X_n\to {\mathbb R}$ for which $f|_E=g$.
\end{theorem}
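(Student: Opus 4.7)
The plan is to reduce Theorem~\ref{th:4} to the pseudocompact case of Theorem~\ref{th:2}. The compactness of the ambient spaces upgrades the assumption of projective injectivity to projective homeomorphism, and supplies both the extension property for the side projections and the pseudocompactness of~$E$, so nothing really new has to be proved.

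First I would note that since $E$ is closed in the compact Hausdorff product $X_1\times\cdots\times X_n$, the set $E$ is itself compact (and in particular pseudocompact). Therefore each projection $p_i:E\to X_i$, being a continuous injection from a compact space into a Hausdorff space, is a closed map and hence a homeomorphism onto its image $E_i=p_i(E)$. This already shows that $E$ is projectively homeomorphic in the sense used in Theorem~\ref{th:2}. Moreover each $E_i$, being a continuous image of a compact space, is compact and therefore closed in~$X_i$; since the compact Hausdorff space $X_i$ is normal, the Tietze--Urysohn theorem quoted in Section~2 yields that $E_i$ has the extension property in $X_i$.

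With these observations in place, $E$ satisfies all hypotheses of the pseudocompact branch of Theorem~\ref{th:2}: it is projectively homeomorphic, each $E_i$ has the extension property in $X_i$, $g$ is of $(n-1)$-th Baire class, and $E$ is pseudocompact. Applying that theorem directly produces a separately continuous function $f:X_1\times\cdots\times X_n\to\mathbb R$ with $f|_E=g$, which is what we want.

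I do not expect any genuine obstacle; the only point requiring a moment of care is the verification that closedness plus projective injectivity in a compact setting forces projective homeomorphism, which rests on the standard fact that continuous injections from compacta to Hausdorff spaces are topological embeddings. One should also be explicit that ``compact'' here includes Hausdorffness (otherwise the embedding step fails), but this is the convention already implicit in the paper's use of Tietze--Urysohn and in the invocation of [13, Theorem~2] inside the proof of Theorem~\ref{th:2}.
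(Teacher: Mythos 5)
Your proposal is correct and follows essentially the same route as the paper: the paper's proof likewise observes that projective injectivity plus compactness of $E$ makes the projections homeomorphic embeddings and then invokes Theorem~\ref{th:2}. You merely spell out the details the paper leaves implicit (compactness of $E_i$ giving the extension property via Tietze--Urysohn, and pseudocompactness of $E$ selecting the appropriate branch of Theorem~\ref{th:2}), which is a faithful elaboration rather than a different argument.
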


\begin{proof} Since the set $E$ is projectively injective, the projective mappings defined on the compact set $E$ and valued in $X_1, X_2,\dots , X_n$ are continuous injective mappings. Thus, they are homeomorphic embeddings. It remains to use Theorem \ref{th:2}.
\end{proof}

\begin{theorem} \label{th:5} Let $X_1$,...$X_n$ be a locally compact spaces such that $X=X_1\times\cdots\times X_n$ be a paracompact, $E\subseteq X$ be a closed locally projectively injective set and $g: E\to{\mathbb R}$ be a function of $(n-1)$-th Baire class. Then there exists a separately continuous function $f:X_1\times\cdots\times X_n\to {\mathbb R}$ with $f|_E=g$.
\end{theorem}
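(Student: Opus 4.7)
The plan is to localize via the product structure, apply Theorem~\ref{th:4} on each compact piece, and then paste the local solutions together with a partition of unity on~$X$.

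\emph{Covering by good boxes.} For each $x\in X$, local projective injectivity of $E$ yields an open neighbourhood $W$ of $x$ with $E\cap W$ projectively injective; using the definition of the product topology and the local compactness (hence regularity) of each $X_i$, I would shrink $W$ to an open box $G(x)=G_1(x)\times\cdots\times G_n(x)\ni x$ with each $\overline{G_i(x)}$ compact and $\overline{G(x)}\subseteq W$. Then $K(x):=\overline{G(x)}=\overline{G_1(x)}\times\cdots\times\overline{G_n(x)}$ is a product of compacts and $E\cap K(x)$ is closed in it and projectively injective.

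\emph{Partition of unity and compact reduction.} Since $X$ is paracompact and Hausdorff, hence normal, I would choose a locally finite continuous partition of unity $\{\varphi_\mu\}_{\mu\in M}$ subordinate to the cover $\{G(x)\}_{x\in X}$: for each $\mu$, $\operatorname{supp}\varphi_\mu\subseteq G_\mu$ for some good box $G_\mu=G_1^\mu\times\cdots\times G_n^\mu$. Setting $K_\mu:=\overline{G_\mu}$, the restriction $g|_{E\cap K_\mu}$ is still of the $(n-1)$-th Baire class, so Theorem~\ref{th:4} applied to the compacts $\overline{G_1^\mu},\dots,\overline{G_n^\mu}$ and the closed projectively injective set $E\cap K_\mu$ produces a separately continuous function $f_\mu:K_\mu\to\mathbb{R}$ with $f_\mu|_{E\cap K_\mu}=g|_{E\cap K_\mu}$.

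\emph{Gluing and main obstacle.} I would define $\tilde{f}_\mu(x)=\varphi_\mu(x)f_\mu(x)$ for $x\in K_\mu$ and $\tilde{f}_\mu(x)=0$ otherwise, and set $f:=\sum_{\mu}\tilde{f}_\mu$. Local finiteness makes this sum well-defined, and the identity $f|_E=g$ follows at once from $\sum_\mu\varphi_\mu\equiv 1$ together with $f_\mu=g$ on $E\cap K_\mu$. The step I expect to require the most care is verifying separate continuity of each $\tilde{f}_\mu$ on all of $X$. Fixing $x_{-i}=(x_1,\dots,x_{i-1},x_{i+1},\dots,x_n)$ and varying $x_i$, the analysis splits by the position of $x_{-i}$ relative to $\prod_{j\ne i}\overline{G_j^\mu}$. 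The essential point is that $\operatorname{supp}\varphi_\mu$ lies inside the open box $G_\mu$, so $\varphi_\mu$ vanishes as soon as any coordinate leaves the corresponding factor of the box; combined with boundedness and sectional continuity of $f_\mu$ on the compact $K_\mu$, this forces the product $\varphi_\mu f_\mu$ to die off to zero at the boundary of $K_\mu$ along every section, so the zero extension outside $K_\mu$ matches continuously. This box-compatibility between $\operatorname{supp}\varphi_\mu$ and $K_\mu$ is precisely why the shrinking in the first step had to produce product neighbourhoods rather than arbitrary open ones.
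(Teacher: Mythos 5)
Your proposal is correct and follows essentially the same route as the paper: cover $X$ by open boxes with compact closures on which $E$ is projectively injective, solve the extension problem on each closed box via Theorem~\ref{th:4}, and glue with a partition of unity subordinated to the open boxes, the key point in both arguments being that $\operatorname{supp}\varphi_\mu$ sits inside the open box so that $\varphi_\mu f_\mu$ extends by zero to a separately continuous function on all of $X$. The only difference is that you spell out the verification of separate continuity of the glued pieces, which the paper merely asserts.
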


\begin{proof} For every point $p=(x_1,\dots,x_n)\in X_1\times\cdots\times X_n$ we choose open neighborhoods $U_p^{(1)},\dots , U_p^{(n)}$ of points $x_1,\dots, x_n$ in the spaces $X_1,\dots,X_n$ respectively such that the closures $$X_p^{(1)}=\overline{U}_p^{(1)}, \dots, X_p^{(n)}=\overline{U}_p^{(n)}$$ are compacts and the set $$E_p=E\cap (X_p^{(1)}\times \cdots \times X_p^{(n)})$$ is projectively injective. According to Theorem \ref{th:4}, for every $p\in X$ there exists a separately continuous function $f_p: X_p^{(1)}\times \cdots \times X_p^{(n)}\to{\mathbb R}$ with $f_p|_{E_p}=g|_{E_p}$. Since the space $X$ is a paracompact, there exists a partition of the unit $(\varphi_i: i\in I)$ on  $X$ which subordinated to the cover $(W_p=U_p^{(1)}\times \cdots\times U_p^{(n)}: p\in X)$ of the space $X$ [17, p.447]. For every $i \in I$ we choose $p_i\in X$ such that $\rm{supp}\varphi_i\subseteq W_{p_i}$ and put 
$$
g_i(x)=\left\{
\begin{array}{l}
f_{p_i}(x), \quad \mbox{if $x\in W_{p_i}$},\\
0, \quad \mbox{if $x\not\in W_{p_i}$}.
\end{array}
\right.
$$

Note that the function $\varphi_i\cdot g_i$ are separately continuous on $X$ and $(\varphi_i g_i)|_E=(\varphi_i |_E)g$. Then the function
$$\displaystyle f=\sum\limits_{i\in I}\varphi_i g_i$$ is a desired function.
\end{proof}

\bibliographystyle{amsplain}

\end{document}